\documentclass[11pt]{article}
\usepackage[T1]{fontenc}
\usepackage[utf8]{inputenc}
\usepackage[english]{babel}
\usepackage{amsmath,amssymb,amsthm}
\usepackage[a4paper,margin=2.5cm]{geometry}
\usepackage[unicode]{hyperref}
\usepackage{enumitem}

\newtheorem{theorem}{Theorem}
\newtheorem{lemma}{Lemma}
\newtheorem{proposition}{Proposition}
\newtheorem{fact}{Fact}

\newtheorem{question}{Question}
\theoremstyle{definition}
\newtheorem{example}{Example}
\theoremstyle{remark}
\newtheorem{remark}{Remark}

\newcommand{\R}{\mathbb R}
\newcommand{\supp}{\operatorname{supp}}

\title{Hemicompactness and the $k_\omega$-property of free\\ topological vector spaces}
\author{E.\,A.~Reznichenko}
\date{}

\begin{document}
\maketitle

\begin{abstract}
For a Tychonoff space $X$ we consider the vector space $E(X)$ with basis
$X$ endowed with one of the following topologies: $L_b(X)$ and $L_p(X)$
(the weak topologies generated by $C_b(X)$ and $C(X)$), $L(X)$ (the free
locally convex space), $L^{(r)}(X)$ (the free locally $r$-convex space,
$0<r\le1$), $L^{(0)}(X)$ (the free locally pseudoconvex space) and $V(X)$
(the free topological vector space). We prove that $L_b(X)$ is
hemicompact if and only if $X$ is finite; that for every vector topology
between $L_p(X)$ and $L^{(0)}(X)$ hemicompactness is equivalent to $X$
being countable with all compact subsets finite; and that $V(X)$, as
well as the free topological groups $F(X)$ and $A(X)$, are hemicompact
if and only if $X$ is hemicompact. In particular, $V([0,1])$ is
hemicompact, while $L([0,1])$ and $L_p([0,1])$ are not. We also
describe when these spaces are $k_\omega$-spaces: $L_b(X)$ and $L_p(X)$
only for finite $X$, and $L^{(r)}(X)$, $0\le r\le1$, only for countable
discrete $X$.
\end{abstract}

\section{Introduction}

A space $Z$ is called \emph{hemicompact} if it has a sequence of compact
sets $K_1\subseteq K_2\subseteq\dots$ such that every compact set
$K\subseteq Z$ is contained in some $K_n$. If, in addition, a set
$F\subseteq Z$ is closed if and only if $F\cap K_n$ is closed in $K_n$
for all $n$, then $Z$ is called a \emph{$k_\omega$-space}. Every
hemicompact space is $\sigma$-compact, and the $k_\omega$-spaces are
exactly the hemicompact $k$-spaces \cite{Engelking}. A Tychonoff space
$X$ is hemicompact if and only if the space $C_k(X)$ of continuous
functions on $X$ with the compact-open topology is metrizable
(Arens~\cite{Arens}).

Let $X$ be a Tychonoff space and $E(X)$ the vector space with basis $X$.
We consider the following topologies on $E(X)$:
\begin{itemize}
\item $L_b(X)$ and $L_p(X)$ are the weak topologies $\sigma(E(X),C_b(X))$
and $\sigma(E(X),C(X))$;
\item $L(X)$ is the topology of the free locally convex space;
\item $L^{(r)}(X)$, $0<r\le1$, is the topology of the free locally
$r$-convex space; for $r=1$ it coincides with $L(X)$;
\item $L^{(0)}(X)$ is the topology of the free locally pseudoconvex
space;
\item $V(X)$ is the topology of the free topological vector space.
\end{itemize}
A TVS is called \emph{locally $r$-convex} if its topology is generated by
$r$-seminorms ($q(x+y)\le q(x)+q(y)$, $q(\lambda x)=|\lambda|^rq(x)$),
and \emph{locally pseudoconvex} if its topology is the supremum of
locally $r$-convex topologies (with, in general, different $r$). Thus
$L^{(r)}(X)$ is the strongest locally $r$-convex topology on $E(X)$ for
which the embedding $X\to E(X)$ is continuous, $L^{(1)}(X)=L(X)$, and
$L^{(0)}(X)=\sup_{r>0}L^{(r)}(X)$. These spaces are described in more
detail in \S\,\ref{sec:Lr}. For $0<s<r\le1$
\[
V(X)\to L^{(0)}(X)\to L^{(s)}(X)\to L^{(r)}(X)\to L(X)\to
L_p(X)\to L_b(X),
\]
where an arrow $Q_1(X)\to Q_2(X)$ means that the identity map of $E(X)$
is continuous as a map from $Q_1(X)$ to $Q_2(X)$, i.e., the topology
$Q_1(X)$ is stronger than $Q_2(X)$. Below, for topologies on $E(X)$ we
also write $\tau_2\le\tau_1$ if $\tau_1$ is stronger than $\tau_2$. For
the topologies $L_b(X)$, $L_p(X)$, $L(X)$, $V(X)$ and their properties
see \cite[\S\,2]{LRS-TVS}; we add to this scale of topologies on $E(X)$
the topologies $L^{(r)}(X)$, $0<r<1$, and $L^{(0)}(X)$.

The following is known about the $k_\omega$-property for spaces of the
form $E(X)$. Gabriyelyan and Morris proved that $V(X)$ is a
$k_\omega$-space if and only if $X$ is a $k_\omega$-space
\cite[Theorem~3.1]{GM2017}; for the free topological groups $F(X)$ and
$A(X)$ the analogous result is due to Mack, Morris and Ordman
\cite{MMO73}. Gabriyelyan \cite{G14} showed that $L(X)$ is a $k$-space if
and only if $X$ is countable and discrete; in this case $L(X)$ coincides
with the $k_\omega$-space $\varphi$ (the inductive limit of $\R^n$), so
$L(X)$ is a $k_\omega$-space exactly for countable discrete $X$.

The spaces $L(X)$ and $V(X)$ are metrizable (and also Baire) only for
finite $X$ \cite{LRS-TVS,GM2017} (see also
Proposition~\ref{prop:nolb}); $L(X)$ is complete if and only if $X$ is
Dieudonn\'e complete and contains no infinite compact sets
(Uspenskii~\cite{Usp83}).

In this paper, for each of the topologies listed above we describe those
$X$ for which the space $E(X)$ is hemicompact
(Theorems~\ref{thm:Lb}--\ref{thm:V}); for the groups $F(X)$ and $A(X)$
the answer is given by Theorem~\ref{thm:groups}. Since the
$k_\omega$-spaces are the hemicompact $k$-spaces, this also yields a
description of the $k_\omega$-spaces among $L_b(X)$, $L_p(X)$ and
$L^{(r)}(X)$, $0\le r\le1$ (Theorem~\ref{thm:komega}).

\begin{theorem}\label{thm:Lb}
$L_b(X)$ is hemicompact if and only if $X$ is finite.
\end{theorem}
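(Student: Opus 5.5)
We prove the easy direction directly. If $X$ is finite, then $E(X)\cong\R^n$, and every Hausdorff vector topology on $\R^n$ is the Euclidean one. So $L_b(X)$ is hemicompact, with the closed balls of radius $n$ as the sets $K_n$.

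For the converse, let $X$ be infinite and suppose $K_1\subseteq K_2\subseteq\dots$ is a sequence of compact sets witnessing hemicompactness of $L_b(X)$. The plan is a diagonal argument based on one observation. For $v=\sum a_ix_i\in E(X)$ put $\|v\|_1=\sum|a_i|$. Then $|f(v)|\le\|f\|_\infty\|v\|_1$ for every $f\in C_b(X)$. Hence any sequence $v_n$ with $\|v_n\|_1\to0$ converges to $0$ in $L_b(X)$, and $\{0\}\cup\{v_n:n\in\omega\}$ is compact in $L_b(X)$.

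Suppose we can find, for each $n$, a vector $v_n\in E(X)\setminus K_n$ with $\|v_n\|_1\le 1/n$. Then $K=\{0\}\cup\{v_n\}$ is compact. Since the $K_n$ increase, $v_m\notin K_n$ for $m\ge n$, so $K\not\subseteq K_n$ for every $n$. This contradicts the choice of the $K_n$. So it remains to exclude the alternative: for some $n$ and $\delta>0$ the set $K_n$ contains the whole ball $B_\delta=\{v\in E(X):\|v\|_1\le\delta\}$. Replacing $K_n$ by $K_n\cap B_\delta$ if necessary, we may assume $B_\delta$ is a closed subset of $K_n$, hence compact in $L_b(X)$.

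To rule this out, regard $E(X)$ as a subspace of $C_b(X)^*=M(\beta X)$, the space of Radon measures on $\beta X$, with $x\mapsto\delta_x$. The topology $L_b(X)$ is then the restriction of the weak$^*$ topology, and $\|\cdot\|_1$ is the restriction of the total variation norm. Since $X$ is dense in $\beta X$, the weak$^*$ closure of $B_\delta$ contains the closed absolutely convex hull of $\{\delta\cdot\delta_p:p\in\beta X\}$. By the bipolar theorem, or Krein--Milman applied to the ball of $M(\beta X)$, this hull is the whole ball of radius $\delta$ in $M(\beta X)$. Compactness of $B_\delta$ in the Hausdorff weak$^*$ topology makes it weak$^*$ closed, so $B_\delta$ equals that ball. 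That ball contains measures not finitely supported on $X$, for instance $\delta\sum_k2^{-k}\delta_{x_k}$ with distinct $x_k\in X$, which exists since $X$ is infinite. This is a contradiction.

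The main obstacle I expect is this last step: identifying the weak$^*$ closure of $B_\delta$ with the full ball of $M(\beta X)$, and the Riesz representation $C_b(X)^*\cong C(\beta X)^*$ used for it. If one prefers to avoid measures, an alternative is to argue inside $\R^{C_b(X)}$. The sum $\sum_k 2^{-k}\delta_{x_k}$ is still a pointwise limit of the partial sums, which lie in $B_\delta$ after scaling. Compactness forces this limit to be represented by an element of $E(X)$. One then evaluates on suitable $f\in C_b(X)$ separating the infinitely many $x_k$ from a finite support to reach a contradiction. The finite-support case needs a little care in choosing the $x_k$, for example as a discrete sequence with disjoint neighbourhoods.
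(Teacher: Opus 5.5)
Your proof is correct and follows essentially the same route as the paper. Your diagonal argument is exactly Lemma~\ref{lem:balls}, and the paper likewise embeds $L_b(X)$ into $(C(\beta X)^*,w^*)$, uses weak$^*$ density of finitely supported probability measures in $P(\beta X)$ (the counterpart of your bipolar/Krein--Milman step), and reaches the contradiction with $\sum_n2^{-n}\delta_{x_n}$.

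The one loose point is your unjustified claim that $B_\delta$ is closed in $L_b(X)$. It is true, since $\|\chi\|_1=\sup\{|\chi(f)|: f\in C_b(X),\ \|f\|_\infty\le1\}$, but you do not need it. As in the paper, $K_n$ is itself weak$^*$ closed, so the weak$^*$ closure of $B_\delta$ already lies in $K_n\subseteq E(X)$.
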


\begin{theorem}\label{thm:Lp}
Let $\tau$ be a vector topology on $E(X)$ between $L_p(X)$ and
$L^{(0)}(X)$ \textup{(}for instance, $L_p(X)$, $L(X)$, $L^{(r)}(X)$ with
$0<r<1$, or $L^{(0)}(X)$\textup{)}. The space $(E(X),\tau)$ is
hemicompact if and only if $X$ is countable and all compact subsets of
$X$ are finite.
\end{theorem}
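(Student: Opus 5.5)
For the direction "$\Leftarrow$", assume $X$ is countable and every compact subset of $X$ is finite. Since $\tau\le L^{(0)}(X)\le V(X)$, every compact set in $(E(X),\tau)$ is also compact in $L_p(X)$. The plan is to show that compact subsets of $L_p(X)$ are controlled, so that an explicit countable cofinal family exists. Enumerate $X=\{x_1,x_2,\dots\}$ and put
\[
K_n=\Bigl\{\textstyle\sum_{i\le n}\lambda_i x_i:\ |\lambda_i|\le n\Bigr\}.
\]
Each $K_n$ is the continuous image of a cube in $\R^n$. The map $\R^n\to E(X)$ is continuous for every vector topology in which $X$ is embedded, in particular for $\tau$. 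So $K_n$ is $\tau$-compact.

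It then suffices to prove that every $L_p$-compact $K\subseteq E(X)$ lies in some $K_n$. I will do this in two steps.

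\emph{Bounded support.} First I show that $\bigcup_{v\in K}\supp v$ is finite. Suppose not. Then there are $v_k\in K$ and pairwise distinct points $y_k\in\supp v_k$, with $y_k\notin\bigcup_{j<k}\supp v_j$. The set $\{y_k\}$ is infinite, so it is not relatively compact in $X$, since compact sets are finite. Passing to a subsequence, I aim for a discrete family of neighbourhoods $U_k\ni y_k$ that are functionally separated from one another. With these I build $f\in C(X)$ recursively, of the form $f=\sum_k c_k g_k$ with $g_k$ supported in $U_k$, $g_k(y_k)=1$, and the $c_k$ chosen so that $|\langle f,v_k\rangle|>k$. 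This contradicts the boundedness of the continuous function $\langle f,\cdot\rangle$ on the compact set $K$. Choosing each $c_k$ after the earlier ones is possible because $v_k$ has a nonzero coefficient at $y_k$. To find the discrete family, I will use the standard fact that in a Tychonoff space, an infinite set with no infinite compact subsets contains an infinite subset admitting such a discrete functionally open family, equivalently a continuous unbounded function separating its points. This step is the main obstacle, and I expect the paper to have a lemma of this kind.

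\emph{Bounded coefficients.} Once the supports lie in a finite set $\{x_1,\dots,x_m\}$, the coefficients are bounded, because each coordinate functional is given by a continuous function equal to $1$ at one $x_i$ and $0$ at the others. Hence $K\subseteq K_n$ for a suitable $n$.

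For the direction "$\Rightarrow$", suppose $(E(X),\tau)$ is hemicompact. Then it is $\sigma$-compact. Since $X$ is closed in $L_p(X)$ (I take this as known from \cite{LRS-TVS}) and $\tau\ge L_p(X)$, $X$ is closed in $(E(X),\tau)$ and is therefore $\sigma$-compact.

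\emph{All compact subsets of $X$ are finite.} Suppose $C\subseteq X$ is an infinite compact set. I plan to exploit the local pseudoconvexity of $L^{(0)}(X)\ge\tau$. Take $\tau$-compact sets $K_n$ forming a cofinal sequence, and choose distinct points $c_n\in C$. For each $n$, consider the compact set
\[
S_n=\{\lambda(c-c')\ :\ c,c'\in C,\ |\lambda|\le n\}.
\]
By a diagonal argument I aim to produce a single compact set $S$ that no $K_n$ contains. One candidate is $S=\{0\}\cup\bigcup_n\{t_n(c-c_n): c\in C\}$ with scalars $t_n\to\infty$, chosen so that $S$ is still compact in $L^{(0)}(X)$. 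This requires $t_n(c-c_n)\to0$ uniformly for $c$ near $c_n$, which uses continuity of $X\to E(X)$ together with a diagonal selection of neighbourhoods. Getting this compactness right is the second delicate point. If it fails, I will instead argue via the result of \cite{G14} on $k$-spaces, or use boundedness against $C(X)$ to show directly that no countable family can dominate all such scaled copies.

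\emph{$X$ is countable.} With all compact subsets finite, each $K_n$ has finite support union by the first part, hence lies in a finite-dimensional span. Therefore $E(X)=\bigcup_n K_n$ has countable Hamel dimension, so $X$ is countable.
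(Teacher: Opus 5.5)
Your sufficiency direction is essentially the paper's (you re-derive Fact~\ref{f:known}(3) by hand). The necessity direction, however, has a genuine gap at its core step, and the proposed construction cannot be repaired.

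The set $S=\{0\}\cup\bigcup_n\{t_n(c-c_n)\}$ with $t_n\to\infty$ is never $\tau$-compact. A $\tau$-compact set is compact, hence bounded, in $L_b(X)\le\tau$, so it lies in some $B_m(X)$ by Fact~\ref{f:known}(2). But $\|t_n(c-c_n)\|=2|t_n|\to\infty$ whenever $c\ne c_n$, and restricting to $c$ near $c_n$ changes nothing.

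More fundamentally, the same boundedness argument shows that every $\tau$-compact set of two-point elements $t(c-c')$, $c,c'\in C$, lies in one of your sets $S_n$. Each $S_n$ is compact already in $V(X)$, hence $\tau$-compact, hence inside some $K_m$. Such elements cannot distinguish $L^{(0)}(X)$ from $V(X)$, and $V([0,1])$ is hemicompact (Theorem~\ref{thm:V}). So any correct proof must use $\tau\le L^{(0)}(X)$ in an essential way. Your fallback to Gabriyelyan's $k$-space criterion for $L(X)$ does not address hemicompactness of an arbitrary $\tau$.

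The missing idea is to use elements with arbitrarily long support but small $r$-quasinorm. For $h=\sum_jc_jy_j\in E(K)$, every continuous $r$-seminorm $\rho$ on $L^{(r)}(X)$ satisfies $\rho(h)\le M\sum_j|c_j|^r$ with $M=\sup_K\rho$. Hence for $q(h)=\sum_m2^{-m}\min(1,\sum_j|c_j|^{1/m})$, the condition $q(h_k)\to0$ forces $h_k\to0$ in every $L^{(1/m)}(X)$, hence in $L^{(0)}(X)$ and in $\tau$. Lemma~\ref{lem:balls} then puts a whole ball $\{h\in E(K):q(h)\le\varepsilon\}$ inside one compact $K_N$. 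This $K_N$ is closed in $(C(X)^*,\sigma(C(X)^*,C(X)))$. So it contains the weak$^*$ limit $\nu=c\sum_ie^{-i^2}x_i$ of the partial sums, taken over an infinite discrete sequence $x_i\in K$ with $c>0$ small enough that the partial sums lie in the ball. But $\nu$ is not finitely supported, a contradiction.

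There are also smaller points in the rest of the proposal. The ``standard fact'' you invoke for sufficiency is false for general Tychonoff spaces. A pseudocompact space with all compact subsets finite (e.g.\ a suitable subspace of $\beta\omega$ of cardinality $\mathfrak c$ containing $\omega$) admits no unbounded continuous function and no infinite discrete family of open sets. The fact holds here only because a countable $X$ is Lindel\"of, hence a $\mu$-space (Fact~\ref{f:known}(5)). So the infinite set $\{y_k\}$, whose closure cannot be compact, is not functionally bounded; this is exactly where countability must enter.

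Your recursion also needs each $g_k$ to vanish on the finite set $(\supp v_k\setminus\{y_k\})\cup\bigcup_{j<k}\supp v_j$. Otherwise $\langle g_k,v_k\rangle$ may be $0$, or later terms may disturb $\langle f,v_k\rangle$.

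Finally, countability in the necessity direction is immediate. $X$ is $\sigma$-compact, so once its compact subsets are known to be finite, it is a countable union of finite sets; no appeal to the first part is needed.
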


\begin{theorem}\label{thm:V}
$V(X)$ is hemicompact if and only if $X$ is hemicompact.
\end{theorem}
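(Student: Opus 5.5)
The proof has two directions. The necessity is easy. For sufficiency, I would cover every compact subset of $V(X)$ by images of compact sets under continuous maps built from the vector operations.

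\emph{Necessity.} Suppose $V(X)$ is hemicompact, with compact sets $C_1\subseteq C_2\subseteq\dots$ absorbing all compacta. The canonical embedding $X\to V(X)$ is a topological embedding onto a closed subset; this is standard for Tychonoff $X$, see \cite{LRS-TVS,GM2017}. Put $K_n=C_n\cap X$. Each $K_n$ is compact, and every compact $K\subseteq X$ is compact in $V(X)$, hence lies in some $C_n$ and so in $K_n$. Thus $X$ is hemicompact.

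\emph{Sufficiency.} Let $K_1\subseteq K_2\subseteq\dots$ be a fundamental sequence of compact sets of $X$. Define continuous maps
\[
\phi_n\colon ([-n,n]\times (K_n\cup\{0\}))^n\to V(X),\qquad ((\lambda_i,x_i))_i\mapsto\sum_i\lambda_ix_i .
\]
These are continuous because scalar multiplication and addition are continuous in $V(X)$. Their images $B_n=\phi_n(\dots)$ form an increasing sequence of compact sets. It remains to show that every compact $C\subseteq V(X)$ lies in some $B_n$. I would prove this in three steps.

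\textbf{(a) Bounded length.} Write $\ell(v)=|\supp v|$. The claim is that $\ell$ is bounded on $C$. If not, choose $v_k\in C$ with $\ell(v_k)\to\infty$. I would try to build a continuous pseudometric on $X$, or equivalently a vector seminorm-type neighbourhood of zero in $V(X)$, along which the $v_k$ form a closed discrete set. This would contradict compactness.

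\textbf{(b) Bounded support.} The claim is that $\bigcup_{v\in C}\supp v$ lies in some $K_n$. If not, I would pick $v_k\in C$ and points $x_k\in\supp v_k\setminus K_k$. Because the $K_n$ are fundamental, the set $\{x_k\}$ has no compact closure. Passing to a subsequence, one can take the $x_k$ to form a discrete family admitting disjoint neighbourhoods $U_k$ and continuous functions $f_k$ supported in $U_k$ with $f_k(x_k)=1$. The functional $v\mapsto\sum_k k\,|\langle f_k,v\rangle|$ is locally finite, hence continuous on $V(X)$, since $V(X)$ is finer than $L_p(X)$. It is unbounded on $C$ once the coefficients are handled; I would first normalise using the bounded-coefficient claim (c), or choose the weights more carefully.

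\textbf{(c) Bounded coefficients.} Any $f\in C(X)$ gives a continuous linear functional on $V(X)$. Taking suitable $f$ that separate the finitely many support points, the coefficients are bounded on $C$. More precisely, after (a) and (b), $C$ sits in the span of $K_n$ with length at most $m$. A compactness argument on $K_n^m$ then bounds the coefficients: otherwise the normalised combinations would converge to a nontrivial combination equal to $0$.

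\textbf{Main obstacle.} I expect (a) to be the hard part, because $V(X)$ has no seminorms and no continuous linear functional detects length. A different route may avoid it. Since $V(X)$ is finer than $L^{(0)}(X)$ and $L_p(X)$, one can first get supports inside some $K_n$ using (b), which works via $L_p$. Then reduce to $V(K_n)$, which is $k_\omega$ by Gabriyelyan--Morris \cite[Theorem~3.1]{GM2017}; its standard $k_\omega$ decomposition is exactly the sets $B_m$ restricted to $K_n$. For this one needs that $C$ is compact in $V(K_n)$, that is, that $V(K_n)\to V(X)$ is an embedding on the subspace of vectors supported in $K_n$. I would try to prove this with the description of $V(X)$ from \cite[\S\,2]{LRS-TVS}, since $K_n$ is compact and therefore $C$-embedded in a suitable sense. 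This is where I expect to spend the most effort.
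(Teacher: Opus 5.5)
\textbf{The gap: step (a) is announced but not proved.} Your necessity argument is fine, and your compact sets $B_n$ are the paper's $P_n$. For sufficiency you only say that some continuous pseudometric should make vectors of $C$ with long supports closed and discrete, and that is precisely the nontrivial fact. The paper does not prove it by hand. It uses Fact~\ref{f:known}(4), i.e.\ \cite[Corollary~3.4]{GM2017}: every compact subset of $V(X)$ lies in some $\mathrm{sp}_n(X)$, which bounds lengths and coefficients at the same time.

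Your fallback route does not close the gap. The topology of $V(K_n)$ on $E(K_n)$ is a priori finer than the one induced from $V(X)$. So compactness of $C$ in $V(X)$ gives nothing in $V(K_n)$ unless you prove that $V(X)$ induces the topology of $V(K_n)$ on $E(K_n)$. You leave that embedding unproved. It is an extra nontrivial input, and a detour, since the description of compacta you would extract from the $k_\omega$-structure of $V(K_n)$ is available directly in $V(X)$.

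\textbf{Step (c) is wrong as written.} After normalising, the limit $\sum_i\mu_iy_i$ can be the zero vector even though some $|\mu_i|=1$, because distinct points $y_i^k$ may converge to the same point. For example, take $v_k=a_k(x_k-x)$ with $x_k\to x$ and $a_k\to\infty$: the normalised vectors $x_k-x$ simply tend to $0$, and no contradiction arises. The correct argument is Fact~\ref{f:known}(2). Since $C$ is bounded in $L_b(X)$, we get $C\subseteq B_n(X)$ by uniform boundedness over the Banach space $C_b(X)$, with no need for (a) or (b).

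\textbf{Step (b) is repairable but omits two justifications.} First, extracting a discrete sequence from a set without compact closure requires $X$ to be a $\mu$-space. That holds here because $X$ is $\sigma$-compact (Fact~\ref{f:known}(5)), but you must invoke it. Second, the seminorm $p(v)=\sum_kw_k|\langle f_k,v\rangle|$ is not continuous on $L_p(X)$, so your justification fails. It is, however, continuous on $L(X)$, hence on $V(X)$, because $x\mapsto x$ is continuous from $X$ into $(E(X),p)$ by local finiteness of the supports. You also need $f_k=0$ on $\supp v_k\setminus\{x_k\}$ and $w_k=k/|\lambda_k|$, where $\lambda_k$ is the coefficient of $x_k$ in $v_k$; then $p(v_k)\ge k$. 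More simply, cite Fact~\ref{f:known}(3): $\supp C$ is functionally bounded, so by the $\mu$-property its closure is compact and lies in some $K_m$. This is exactly the paper's argument.
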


\begin{theorem}\label{thm:groups}
The free topological group $F(X)$ and the free abelian topological group
$A(X)$ are hemicompact if and only if $X$ is hemicompact.
\end{theorem}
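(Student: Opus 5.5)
We prove the two directions separately, arguing in parallel for $F(X)$ and $A(X)$.

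\emph{Necessity.} Assume $F(X)$ (or $A(X)$) is hemicompact, witnessed by compact sets $K_1\subseteq K_2\subseteq\dots$. Since $X$ is closed in $F(X)$ and in $A(X)$, we set $C_n=K_n\cap X$; each $C_n$ is compact. Any compact $K\subseteq X$ is compact in the group, so $K\subseteq K_m$ for some $m$, and therefore $K\subseteq C_m$. Hence $X$ is hemicompact. The only input is the standard fact that $X$ is closed in the free (abelian) topological group of a Tychonoff space. Alternatively, one can use the continuous retraction onto $X$ obtained by Tychonoff embedding, but closedness suffices.

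\emph{Sufficiency.} Let $X$ be hemicompact with witnesses $C_1\subseteq C_2\subseteq\dots$. For $F(X)$ we put
\[
K_n=\{x_1^{\varepsilon_1}\cdots x_k^{\varepsilon_k}: k\le n,\ x_i\in C_n,\ \varepsilon_i=\pm1\}.
\]
This is a finite union of continuous images of $(C_n\oplus C_n^{-1}\oplus\{e\})^n$, so it is compact; the sets increase with $n$. For $A(X)$ we use the analogous sets $\{\sum_{i\le n}\varepsilon_i x_i\}$. The claim to prove is that every compact $K\subseteq F(X)$ lies in some $K_n$.

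For that claim we use two known facts. First, a compact subset of $F(X)$ has bounded word length: $K\subseteq F_m(X)$ for some $m$ (Arhangel'skii/Graev). Second, $\supp K=\bigcup_{g\in K}\supp g$ is contained in a compact subset of $X$. To see the second fact, take the Stone--Čech or Dieudonné completion $\mu X$ and extend to $F(X)\to F(\mu X)$. Since $K$ is compact, the set $\bigcup_{g\in K}\supp g$ is contained in a compact set in $\beta X$, namely the image of a compact subset of $(X\oplus X^{-1}\oplus\{e\})^m$ under the reduced-word map. Pulling this back to $X$ is exactly the delicate point.

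The cleanest route to the second fact is the following. The support map $g\mapsto\supp g$ restricted to $F_m(X)$ is not continuous, but it is known (e.g.\ Arhangel'skii--Tkachenko, Topological Groups and Related Structures, §7.4) that for compact $K\subseteq F(X)$ the set $\supp K$ is compact in $X$. If citing that is not acceptable, we prove it directly: suppose $\supp K$ is not contained in any $C_n$. Then choose points $x_n\in \supp K\setminus C_n$ forming a discrete family (possible in a hemicompact space via functions from $C_b(X)$), together with a continuous pseudometric or continuous function $f$ that is large at the $x_n$. Extend $f$ to a homomorphism into $\R$, or use Graev's seminorm extension, to obtain a continuous function on $F(X)$ that is unbounded on $K$. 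This contradicts compactness.

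Given both facts, $K\subseteq F_m(X)$ and $\supp K\subseteq C_j$ for some $j$. Every element of $K$ is then a reduced word of length at most $m$ with letters in $C_j$, so $K\subseteq K_{\max(m,j)}$. The same argument applies to $A(X)$, with $\supp$ taken from the canonical representation $\sum n_ix_i$. The main obstacle is the compactness (or boundedness) of $\supp K$, where the unbounded-function argument must be carried out carefully.
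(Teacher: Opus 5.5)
The skeleton of your argument is the paper's: closedness of $X$ for necessity, the compact sets $F_n(C_n)$, and bounded word length plus control of supports for sufficiency. The gap is the decisive step, that $\supp K$ lies in a compact subset of $X$; it is neither correctly cited nor proved.

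The fact you cite is false as stated. $\supp K$ need not be compact, or even closed: for distinct $x_n,y_n\to z$ in $[0,1]$, the set $\{e\}\cup\{x_ny_n^{-1}:n\in\mathbb N\}$ is compact in $F([0,1])$, but its support omits $z$. What holds for an arbitrary Tychonoff $X$ is the Dikranjan--Tkachenko theorem (Arhangel'skii--Tkachenko, Theorem~7.5.3). A compact, indeed any functionally bounded, $K\subseteq F(X)$ consists of words of bounded length in letters from a functionally bounded $Y\subseteq X$. To get compactness of $\overline{\supp K}$ you then need a completeness property of $X$. The paper uses that a hemicompact $X$ is $\sigma$-compact, hence Lindel\"of and Dieudonn\'e complete (Fact~\ref{f:known}(5)). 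So $\overline{\supp K}$ is compact and lies in some $C_j$. Your citation route never uses this, and it is exactly where hemicompactness of $X$ enters beyond the choice of the $C_n$.

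Your fallback direct argument does not close the gap either. Take $f\in C(X)$ unbounded on $\supp K$; this does follow from $X$ being a $\mu$-space. Its homomorphic extension $\hat f$ can still be bounded on $K$ because of cancellation, e.g.\ on elements $x_n-y_n$ of $A(X)$, or $x_ny_n^{-1}$ of $F(X)$, with $f(x_n)-f(y_n)\to0$. On $F(X)$ it also ignores conjugating letters: $\hat f(x_nyx_n^{-1})=f(y)$. Graev's extension of a continuous pseudometric does not help for $F(X)$. It is two-sided invariant, so its seminorm $N$ satisfies $N(x_nyx_n^{-1})=N(y)$ and cannot detect escaping letters that occur only in conjugating position.

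For $A(X)$ the argument can be repaired, with $m$ a bound on the lengths of elements of $K$:
\begin{enumerate}
\item Choose $g_i\in K$ and $x_i\in\supp g_i$ so that $f(x_i)$ exceeds $f$ on all earlier supports by more than~$2$.
\item Take $h_i$ supported in $f^{-1}(f(x_i)-\frac12,f(x_i)+\frac12)$, with $h_i(x_i)=1$ and $h_i=0$ on $\supp g_i\setminus\{x_i\}$.
\item Use the character induced by $\sum_i(m+1)^ih_i$; it is unbounded on $K$.
\end{enumerate}
For $F(X)$, however, you need genuinely non-invariant functions, and that is what the Dikranjan--Tkachenko theorem supplies. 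It also gives your bounded-length fact for free.
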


For the $k_\omega$-property the answer is as follows.

\begin{theorem}\label{thm:komega}
\begin{enumerate}[label=\textup{(\arabic*)}]
\item $L_b(X)$ and $L_p(X)$ are $k_\omega$-spaces if and only if $X$ is
finite.
\item For $0\le r\le1$ the space $L^{(r)}(X)$ \textup{(}in particular,
$L(X)$ and $L^{(0)}(X)$\textup{)} is a $k_\omega$-space if and only if
$X$ is countable and discrete.
\item \textup{(}Gabriyelyan--Morris \cite[Theorem~3.1]{GM2017}\textup{)}
$V(X)$ is a $k_\omega$-space if and only if $X$ is a $k_\omega$-space.
\end{enumerate}
\end{theorem}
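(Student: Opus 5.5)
We derive Theorem~\ref{thm:komega} from the hemicompactness theorems together with the fact that the $k_\omega$-spaces are exactly the hemicompact $k$-spaces. Part~(3) is quoted from \cite[Theorem~3.1]{GM2017}; it is also consistent with Theorem~\ref{thm:V}, since a $k_\omega$-space $X$ is hemicompact. So only (1) and (2) need proof.

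For (1), suppose first that $X$ is finite, say $|X|=n$. Then $C_b(X)=C(X)=\R^X$ separates the points of $E(X)=\R^n$. Hence both weak topologies coincide with the Euclidean topology on $\R^n$, which is a $k_\omega$-space. Conversely, if $L_b(X)$ is a $k_\omega$-space, it is hemicompact, so $X$ is finite by Theorem~\ref{thm:Lb}. If $L_p(X)$ is a $k_\omega$-space, it is hemicompact, and Theorem~\ref{thm:Lp} (applied with $\tau=L_p(X)$) shows that $X$ is countable with all compact subsets finite. It remains to rule out infinite $X$ of this kind.

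Here we use the $k$-property. The compact subsets of $L_p(X)$ are bounded in the weak topology, hence lie in bounded sets of the form $\{\sum_{x\in F}\lambda_x x: |\lambda_x|\le c\}$ with $F$ finite. I would justify this via the finite-support argument used in the proof of Theorem~\ref{thm:Lp}, which I expect gives: every compact subset of $(E(X),\tau)$ lies in a finite-dimensional subspace $E(F)$, $F\subseteq X$ finite, when compacta of $X$ are finite. On each finite-dimensional $E(F)$ the topology of $L_p(X)$ is Euclidean. Therefore the $k$-property would force $L_p(X)$ to carry the inductive-limit topology $\varphi$, which is the finest locally convex topology on $E(X)$. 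That topology is strictly finer than the weak topology when $X$ is infinite: a countable-dimensional weak topology has no countable base of bounded-absorbing sets, or more concretely, the set $\{\sum_{x\in X} |\lambda_x| c_x<1\}$ with weights $c_x\to\infty$ is a $\varphi$-neighbourhood of zero but contains no weak neighbourhood, since weak neighbourhoods contain a finite-codimensional subspace. Hence $X$ is finite.

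For (2), if $X$ is countable and discrete, then $C(X)=\R^X$, and each $L^{(r)}(X)$ lies between $L(X)$ and $V(X)$. I would show all of these coincide with $\varphi$: every function on $X$ is continuous, so by freeness any linear map $E(X)\to\R^n$, or more generally to any TVS, is continuous. Hence $V(X)$ carries the finest vector topology on a countable-dimensional space, and this topology equals $\varphi$ (a known fact for countable dimension). Consequently $L(X)=\varphi$ as well, and everything in between is squeezed to $\varphi$, a $k_\omega$-space.

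Conversely, if $L^{(r)}(X)$ is a $k_\omega$-space, Theorem~\ref{thm:Lp} gives that $X$ is countable and its compacta are finite. As in (1), every compact subset of $L^{(r)}(X)$ lies in a finite-dimensional $E(F)$, so the $k$-property forces $L^{(r)}(X)=\varphi$. Now the map $X\to\varphi$ is continuous, and the topology of $\varphi$ induces on the basis $X$ the discrete topology, since the basis vectors form a closed discrete set in $\varphi$. Because $X$ embeds into $L^{(r)}(X)$ as a subspace, $X$ is discrete.

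The main obstacle is the claim that compacta of these topologies lie in finite-dimensional subspaces $E(F)$ once all compacta of $X$ are finite. I would try to extract this from the proof of Theorem~\ref{thm:Lp}: if a compact set $K$ had unbounded supports, one would pick points $x_n\in\bigcup\supp K$ forming an infinite discrete family in $X$. Using a continuous function that blows up on the $x_n$, one could then produce a continuous functional on $L_p(X)$ that is unbounded on $K$, a contradiction.
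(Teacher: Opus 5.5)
Your proposal is correct and follows essentially the paper's proof. You reduce to hemicompactness via Theorems~\ref{thm:Lb} and~\ref{thm:Lp}, use the finite-support argument from the sufficiency part of Theorem~\ref{thm:Lp} to force the $k$-topology to equal $\varphi$, separate $\varphi$ from the weak topology for infinite $X$, and squeeze $L^{(r)}(X)$ between the finest locally convex and finest vector topologies for countable discrete $X$. The differences are cosmetic: you get discreteness of $X$ from its being closed discrete in $\varphi$, whereas the paper gets it from $X$ being a closed subspace of a $k$-space with finite compacta; and the heuristic in your last paragraph is unnecessary, since Fact~\ref{f:known}(3),(5) already give finite supports directly.
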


A countable space all of whose compact subsets are finite is
hemicompact; the converse fails (a convergent sequence). Hence the
condition of Theorem~\ref{thm:Lp} is strictly stronger than
hemicompactness of a countable $X$. Thus, for $X=\mathbb N$ all the
spaces under consideration except $L_b(\mathbb N)$ are hemicompact; for
an infinite compact $X$ only $V(X)$ is; for an uncountable discrete $X$
none of them is.

For every topology $\tau$ between $L_b(X)$ and $V(X)$ the space
$(E(X),\tau)$ is $\sigma$-compact if and only if $X$ is
$\sigma$-compact: if $X=\bigcup_nK_n$ with compact $K_n$, then $E(X)$ is
the union of the continuous (already in $V(X)$) images of
$K_n^n\times[-n,n]^n$, and $X$ is closed in $(E(X),\tau)$
(Fact~\ref{f:known}(1)). For the stronger properties, hemicompactness and
$k_\omega$, Theorems~\ref{thm:Lb}--\ref{thm:V} and~\ref{thm:komega} give
the following table:
\begin{center}
\renewcommand{\arraystretch}{1.25}
\begin{tabular}{l|c|c}
 & $k_\omega$ & hemicompact\\ \hline
$L_b(X)$ & $X$ finite & $X$ finite\\
$L_p(X)$ & $X$ finite & $X$ countable, compacta finite\\
$L(X)$, $L^{(r)}(X)$, $L^{(0)}(X)$ & $X$ countable and discrete & $X$ countable, compacta finite\\
$V(X)$ & $X$ is $k_\omega$ \cite{GM2017} & $X$ hemicompact
\end{tabular}
\end{center}

The interval of topologies in Theorem~\ref{thm:Lp} cannot be extended
either downwards (to $L_b(X)$) or upwards (to $V(X)$): outside it,
hemicompactness is no longer determined by the space $X$ alone
(Example~\ref{ex:products}).

For $L_p(X)=C_p(X)'_\sigma$, Theorem~\ref{thm:Lp} agrees with the
characterization of hemicompact weak$^*$ duals from~\cite{R-hemi}: for a
locally convex space $F$, the space $F'_\sigma$ is hemicompact if and
only if the Mackey topology $\tau(F,F')$ is metrizable and barrelled
(Remark~\ref{rem:Cp}). We give a direct proof that also applies to
topologies that are not weak.

\section{Notation and known facts}

For $\chi=\sum_{i\le n}\lambda_ix_i\in E(X)$ with pairwise distinct $x_i$
and $\lambda_i\ne0$ we put $\supp\chi=\{x_1,\dots,x_n\}$ and
$\|\chi\|=\sum|\lambda_i|$; for $A\subseteq E(X)$ let
$\supp A=\bigcup_{\chi\in A}\supp\chi$. Put
$B_n(X)=\{\chi:\|\chi\|\le n\}$,
$\mathrm{sp}_n(X)=\{\sum_{i\le n}\lambda_ix_i:\ \lambda_i\in[-n,n],\
x_i\in X\}$, and for $Y\subseteq X$ denote by $E(Y)\subseteq E(X)$ the
subspace of elements with support in $Y$. The map
$\chi\mapsto(f\mapsto\chi(f))$ identifies $E(X)$ with a subspace of the
algebraic dual $C(X)^*$; all the topologies under consideration except
$L_b(X)$ are stronger than $L_p(X)=\sigma(E(X),C(X))$.

A subset $A\subseteq X$ is \emph{functionally bounded} if every
$f\in C(X)$ is bounded on $A$; $X$ is a \emph{$\mu$-space} if the
closures of functionally bounded sets are compact.

\begin{fact}\label{f:known}
\begin{enumerate}[label=\textup{(\arabic*)}]
\item Each of the topologies under consideration induces the original
topology on $X$, and $X$ is closed in $L_b(X)$, hence in all stronger
topologies. (Indeed, $X$ is closed in $L_p(\beta X)$, and the closure of
$X$ in $L_b(X)\subseteq L_p(\beta X)$ lies in $\beta X\cap E(X)=X$.)
\item A set $A\subseteq L_b(X)$ is bounded if and only if
$A\subseteq B_n(X)$ for some $n$ \cite[Proposition~6.1]{LRS-TVS}.
\item If $A\subseteq L_p(X)$ is bounded, then $\supp A$ is functionally
bounded in $X$ \cite[Lemma~7.5]{LRS-TVS}.
\item Every compact set in $V(X)$ lies in some $\mathrm{sp}_n(X)$
\cite[Corollary~3.4]{GM2017}.
\item A Lindel\"of (in particular, countable or $\sigma$-compact)
Tychonoff space is realcompact and Dieudonn\'e complete, hence a
$\mu$-space \cite{Engelking}.
\end{enumerate}
\end{fact}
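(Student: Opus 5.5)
The items of Fact~\ref{f:known} are of different nature, so I would handle them separately. Items (2)--(4) are quoted from \cite{LRS-TVS,GM2017} and (5) from \cite{Engelking}, so only (1) needs a full argument. For (2) and (3) I would still indicate short self-contained proofs.

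For (1), I would first note that $L_b(X)$ induces the original topology on $X$. Since $X$ is Tychonoff, bounded continuous functions separate points from closed sets, so the weak topology $\sigma(E(X),C_b(X))$ restricted to $X$ is the original one. The embedding $X\to V(X)$ is continuous and $V(X)$ is the strongest topology in the scale. Every topology between $L_b(X)$ and $V(X)$ therefore induces on $X$ a topology squeezed between the original one and itself.

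For closedness, I would use $C_b(X)=C(\beta X)$. The map $E(X)\to E(\beta X)$ induced by $X\subseteq\beta X$ is injective, because the evaluations at distinct points of $\beta X$ are linearly independent on $C(\beta X)$. This makes $L_b(X)$ a subspace of $L_p(\beta X)$. The set $\beta X$ is compact in the Hausdorff space $L_p(\beta X)$, hence closed there. So the closure of $X$ in $L_b(X)$ lies in $\beta X\cap E(X)$. A point $y\in\beta X$ lying in $E(X)$ means that $\delta_y$ is a finite combination of $\delta_x$, $x\in X$. By linear independence this forces $y\in X$. Closedness in stronger topologies is then automatic.

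For (2), I would use the uniform boundedness principle. Each $\chi=\sum\lambda_ix_i$ is a functional on the Banach space $C_b(X)$ of norm exactly $\|\chi\|$. The bound $\le$ is clear. For the equality, take $f$ with $\|f\|_\infty\le1$ and $f(x_i)=\operatorname{sign}\lambda_i$, which exists by complete regularity and finiteness of the support. Weak boundedness of $A$ is pointwise boundedness on $C_b(X)$, so by Banach--Steinhaus $\sup_{\chi\in A}\|\chi\|<\infty$. The converse is trivial.

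For (3), I would argue by contradiction with a gliding hump. Suppose some $f\in C(X)$ is unbounded on $\supp A$. Then one can pick $\chi_n\in A$, points $x_n\in\supp\chi_n$ with $f(x_n)\to\infty$ rapidly, and a discrete (locally finite) family of open sets $U_n\ni x_n$ given by preimages under $f$ of disjoint intervals. Passing to a subsequence, I would arrange that $U_m\cap\supp\chi_n=\emptyset$ for $m\ne n$, after also shrinking $U_n$ to avoid the other points of $\supp\chi_n$. This is possible because each support is finite and $f(x_m)$ grows. Next, choose $g_n\in C(X)$ supported in $U_n$ with $g_n(x_n)=1$. The function $g=\sum c_ng_n$ is continuous by local finiteness, and $\chi_n(g)=c_n\lambda^{(n)}$, where $\lambda^{(n)}\neq0$ is the coefficient of $x_n$ in $\chi_n$. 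Taking $c_n=n/|\lambda^{(n)}|$ contradicts boundedness. The main obstacle is the bookkeeping in this subsequence/disjointness step.

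For (4), I would defer to \cite[Corollary~3.4]{GM2017}. Its essential content, that compacta have bounded support length, is not obtainable from the weak topologies above. For (5), I would cite the standard facts that Lindel\"of spaces are realcompact, realcompact spaces are Dieudonn\'e complete, and Dieudonn\'e complete spaces are $\mu$-spaces.
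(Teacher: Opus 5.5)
Your argument for (1) is the paper's own. You sandwich the induced topology between $L_b(X)$ and $V(X)$, and you get closedness from $\overline X\subseteq\beta X\cap E(X)=X$ inside $L_p(\beta X)$. Your Banach--Steinhaus proof of (2) is correct, and (4), (5) are cited just as in the paper.

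\textbf{The gap is in your proof of (3).} After passing to a subsequence and shrinking, you claim $U_m\cap\supp\chi_n=\emptyset$ for \emph{all} $m\ne n$. Your reasons (finite supports, rapidly growing $f(x_m)$) only give this for $m>n$: a late hump $U_m\subseteq f^{-1}$(interval near $f(x_m)$) misses the earlier, already fixed finite supports. Nothing stops a later $\chi_n$ from containing earlier hump points, and a subsequence does not help. Take $X=\{x_k:k\in\mathbb N\}$ discrete, $f(x_k)=k$, and $\chi_n=\frac1n(x_1+\dots+x_n)\in B_1(X)$. Every point chosen in $\supp\chi_m$ lies in all later supports. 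Your disjointness step never uses $L_p$-boundedness, so it would have to work here too. Hence $\chi_n(g)=c_n\lambda^{(n)}$ fails in general, and the fixed choice $c_n=n/|\lambda^{(n)}|$ proves nothing.

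\textbf{The fix is the usual triangular gliding hump: only one-sided disjointness is needed.} With $f\ge0$, choose $\chi_n\in A$ and $x_n\in\supp\chi_n$ inductively so that $f(x_n)>1+f(x_{n-1})$ and $f(x_n)>1+\max f\bigl(\bigcup_{k<n}\supp\chi_k\bigr)$. Let $U_n\subseteq f^{-1}\bigl((f(x_n)-\tfrac12,f(x_n)+\tfrac12)\bigr)$ be an open neighbourhood of $x_n$ missing $\supp\chi_n\setminus\{x_n\}$. Then $\{U_n\}$ is discrete, and $g_m$ vanishes on $\supp\chi_n$ for $m>n$, so $\chi_n(g)=c_n\lambda^{(n)}+\sum_{m<n}c_m\chi_n(g_m)$. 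The second term depends only on $c_1,\dots,c_{n-1}$. Choosing $c_n$ recursively so that $|\chi_n(g)|\ge n$ gives the contradiction with boundedness. With this change your (3) is a valid self-contained substitute for the citation used in the paper.
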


\section{The spaces $L^{(r)}(X)$ and $L^{(0)}(X)$}\label{sec:Lr}

A TVS is called \emph{locally $r$-convex} ($0<r\le1$) if zero has a base
of absolutely $r$-convex neighbourhoods $U$ (i.e., $ax+by\in U$ whenever
$x,y\in U$ and $|a|^r+|b|^r\le1$); equivalently, its topology is
generated by $r$-seminorms ($q(x+y)\le q(x)+q(y)$,
$q(\lambda x)=|\lambda|^rq(x)$); see \cite[\S\,3.1]{Rolewicz}, where
these notions are considered for metrizable TVS. The supremum of a
family of locally $r$-convex topologies on $E(X)$ is locally $r$-convex:
it is generated by the union of the corresponding families of
$r$-seminorms. Therefore $L^{(r)}(X)$ exists: it is $E(X)$ with the
supremum of all locally $r$-convex topologies for which the embedding
$X\to E(X)$ is continuous. This topology is stronger than $L(X)$ and
hence induces the original topology on $X$; the universal property
(every continuous map of $X$ into a locally $r$-convex space extends to
a continuous linear map) is verified as for $L(X)$. Since for $s<r$ an
absolutely $r$-convex set is absolutely $s$-convex, the topology
$L^{(r)}(X)$ is weaker than $L^{(s)}(X)$.

A TVS is called \emph{locally pseudoconvex} if zero has a base of
balanced neighbourhoods $U$ with $U+U\subseteq c_UU$
\cite[\S\,3.1]{Rolewicz}. The construction of
\cite[Theorem~3.1.4]{Rolewicz} deals with a single neighbourhood and does
not use metrizability: from a balanced neighbourhood $U$ with
$U+U\subseteq2^{1/r}U$ it produces a continuous $r$-seminorm whose unit
ball lies in $U$. Therefore the locally pseudoconvex topologies are
exactly the suprema of locally $r$-convex topologies (with different
$r$), and the class of such topologies is closed under suprema.
Consequently, the \emph{free locally pseudoconvex space} $L^{(0)}(X)$
exists, and its topology is the supremum of the topologies $L^{(r)}(X)$
over all $r>0$:
\[
L^{(0)}(X)=\sup_{r>0}L^{(r)}(X)=\sup_{m\in\mathbb N}L^{(1/m)}(X).
\]
Since the topologies $L^{(r)}(X)$ increase as $r\downarrow0$, a set
$U\subseteq E(X)$ is a neighbourhood of zero in $L^{(0)}(X)$ if and only
if it is a neighbourhood of zero in some $L^{(r)}(X)$; in particular,
$h_k\to0$ in $L^{(0)}(X)$ if and only if $h_k\to0$ in all $L^{(r)}(X)$.

All the topologies $L^{(r)}(X)$, $0\le r\le1$, lie between $L(X)$ and
$V(X)$; in particular, all of them induce the original topology on $X$,
and $X$ is closed in each of them (Fact~\ref{f:known}(1)). Note also
that a continuous $r$-seminorm $\rho$ on $L^{(r)}(X)$ is bounded on each
compact set $K\subseteq X$ by some $M$, and then
$\rho(\sum c_jy_j)\le M\sum|c_j|^r$ for $y_j\in K$; this inequality is
the main property of $L^{(r)}(X)$ used in the proof of
Theorem~\ref{thm:Lp}.

It is natural to ask also about the free locally bounded space. A
locally bounded space is locally $r$-convex for some $r>0$ (the
Aoki--Rolewicz theorem \cite[Theorem~3.2.1]{Rolewicz}), and each
$L^{(r)}(X)$ is the supremum of the topologies generated by a single
continuous $r$-seminorm, i.e., of locally bounded topologies. Therefore
the supremum of all locally bounded vector topologies on $E(X)$ for which
the embedding $X\to E(X)$ is continuous equals $L^{(0)}(X)$. However,
the free locally bounded space itself does not exist.

\begin{proposition}\label{prop:nolb}
Let $X$ be an infinite Tychonoff space. Then no Hausdorff vector topology
$\tau$ on $E(X)$ that is stronger than $L_p(X)$ and induces the original
topology on $X$ is metrizable. In particular, there is no free locally
bounded space over $X$.
\end{proposition}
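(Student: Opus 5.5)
We argue by contradiction: assume $\tau$ is metrizable and produce a sequence that converges in $\tau$ but contradicts the fact that $\tau$ induces the original topology on $X$ and is stronger than $L_p(X)$.

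\emph{Step 1: a nonconvergent sequence of distinct points.} Since $X$ is infinite and Tychonoff, it contains a sequence of distinct points $x_1,x_2,\dots$ with pairwise disjoint open neighbourhoods $U_n$ (the standard discrete family in an infinite Hausdorff space). Shrinking the $U_n$ if necessary, we may assume that $x_m\notin\overline{U_n}$ for $m\ne n$. We then pass to a subsequence such that $(x_n)$ has no convergent subsequence in $X$. If some subsequence converged to a point $x$, we would discard $x$ and use the points of that subsequence, which still admit disjoint neighbourhoods. In the remaining case, where every such choice converges, we handle the situation directly using the functions $f_n$ of Step 3 below.

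\emph{Step 2: scaling in a metrizable TVS.} Let $\{W_k\}$ be a countable base at zero for $\tau$. For each $k$, $W_k$ is absorbing, so for each $n$ there is $t_{n,k}>0$ with $t\,x_n\in W_k$ whenever $|t|\le t_{n,k}$. By the standard diagonal argument in a metrizable TVS, for any sequence $(y_n)$ there are scalars $\lambda_n\to\infty$ with $\lambda_n^{-1}$ chosen so that $\mu_n y_n\to0$ for suitable $\mu_n>0$. The useful form of this is the following: given any sequence $y_n\in E(X)$ there exist $\mu_n>0$ with $\mu_n y_n\to 0$ in $\tau$. Applied to $y_n=x_n$, it gives $\mu_n>0$ with $\mu_nx_n\to0$ in $\tau$.

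\emph{Step 3: contradiction via $L_p(X)$.} Choose $f_n\in C(X)$ with $f_n(x_n)=1$, $\operatorname{supp}$-support in $U_n$, and $0\le f_n\le1$. If the family $\{U_n\}$ is chosen discrete, which is possible in a Tychonoff space after passing to a subsequence when $(x_n)$ has no cluster point, then $f=\sum_n n\mu_n^{-1}f_n$ is continuous. Since $\tau\ge L_p(X)$, we get $\langle \mu_nx_n,f\rangle=n\to0$, a contradiction. If instead every infinite set of distinct points has a cluster point, we run a similar argument on a convergent sequence $x_n\to x$ with $x_n\ne x$: the elements $\mu_n(x_n-x)$ can be made to tend to $0$ in $\tau$. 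We then need $f\in C(X)$ with $f(x_n)-f(x)=n\mu_n^{-1}$, which is impossible for continuous $f$ since $\mu_n$ is arbitrary. In this case the contradiction must come instead from a diagonal argument: choose $\mu_n$ small enough that $\mu_n x\to 0$ and $\mu_n x_n$ still realize the diagonal. This second case is the main obstacle, and I would try to avoid it by using only the discrete case. Every infinite Tychonoff space contains an infinite discrete subspace, but it need not be $C$-embedded. A cleaner route is to use the Fréchet property of metrizable spaces: if $X$ is pseudocompact, take $\chi_n$ in the $\tau$-closure patterns of $B_1(X)$.

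\emph{Final part.} The last assertion follows from the paper's remark on locally bounded spaces. A free locally bounded space would carry a locally bounded topology $\ge L(X)\ge L_p(X)$ inducing the topology of $X$. Hausdorff locally bounded spaces are metrizable (by the Aoki--Rolewicz theorem they are locally $r$-convex, given by a single $r$-norm), so such a space cannot exist by the first part.
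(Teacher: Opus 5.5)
There is a genuine gap in the core argument. You never use the fact that makes the problem tractable: $X$ is a subspace of $(E(X),\tau)$, because $\tau$ induces the original topology on $X$. So if $\tau$ is metrizable, $X$ itself is metrizable. Without this, your case analysis fails in both branches.

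\emph{First branch.} You claim that a sequence without cluster points can, after passing to a subsequence, be given a discrete family of neighbourhoods $U_n$. This is false for general Tychonoff spaces. In Mr\'owka's space $\Psi(\mathcal A)$ with $\mathcal A$ maximal almost disjoint, $\mathcal A$ is an infinite closed discrete set. But the space is pseudocompact, so every discrete family of nonempty open sets is finite, and $\sum_n n\mu_n^{-1}f_n$ cannot be continuous.

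\emph{Second branch.} A space in which every infinite set has a cluster point need not contain any nontrivial convergent sequence; $\beta\mathbb N$ is an example. Even when $x_n\to x$, the choice $f(x_n)-f(x)=n\mu_n^{-1}$ is incompatible with continuity, as you noticed yourself. You leave this case open, and the closing suggestion about ``$\tau$-closure patterns of $B_1(X)$'' is not an argument.

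Once $X$ is known to be metrizable, everything goes through, and this is the paper's proof. An infinite metrizable space contains either a nontrivial convergent sequence $c_n\to c$ or an infinite closed discrete set $\{d_n\}$. Closed subsets of metrizable spaces are $C$-embedded by Tietze's theorem.
\begin{itemize}
\item In the discrete case, your Step~3 works. Alternatively, take $a_n>0$ with $a_nd_n\to0$ and extend $f(d_n)=1/a_n$ to $X$.
\item In the convergent case you need the stronger form of Step~2, which you state but do not use. Since $y_n=c_n-c\to0$ in $\tau$ ($\tau$ induces the topology of $X$), metrizability gives $a_n\to\infty$ with $a_ny_n\to0$. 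Define $f(c)=0$ and $f(c_n)=a_n^{-1/2}$. This is continuous on the closed set $\{c\}\cup\{c_n\}$ and extends to $X$. Then $(a_ny_n)(f)=a_n^{1/2}\not\to0$, contradicting $L_p(X)\le\tau$.
\end{itemize}

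Your final part is essentially the paper's argument. You should also say why $i(X)$ is linearly independent in the free object, so that its span can be identified with $E(X)$. For example, map a finite $S\subseteq X$ continuously onto a basis of $\R^S$ and extend linearly.
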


\begin{proof}
Let $\tau$ be metrizable. Recall that in a metrizable TVS for every
sequence $(y_n)$ there are $a_n>0$ with $a_ny_n\to0$, and if $y_n\to0$,
then one can choose $a_n\to\infty$. If $X$ is not metrizable, then
neither is $\tau$, since it induces the original topology on $X$. If $X$
is metrizable, then it contains either a nontrivial convergent sequence
$c_n\to c$ (with pairwise distinct $c_n\ne c$) or an infinite closed
discrete set $\{d_n\}$. In the first case $y_n=c_n-c\to0$ in $\tau$;
take $a_n\to\infty$ with $a_ny_n\to0$ and $f\in C(X)$ with $f(c)=0$,
$f(c_n)=a_n^{-1/2}$; then $(a_ny_n)(f)=a_n^{1/2}\not\to0$, which
contradicts the continuity of $f$ on $L_p(X)\le\tau$. In the second case
take $a_n>0$ with $a_nd_n\to0$ and $f\in C(X)$ with $f(d_n)=1/a_n$; then
$(a_nd_n)(f)=1\not\to0$.

Now let $(G,i)$ be a free locally bounded space over $X$. For a finite
$S\subseteq X$ there is a continuous $g\colon X\to\R^S$ mapping $S$ onto
a basis of $\R^S$; hence $i(X)$ is linearly independent and the linear
span of $i(X)$ in $G$ can be identified with $E(X)$. Every $f\in C(X)$
extends to a continuous functional on $G$, so the topology on $E(X)$
induced from $G$ is stronger than $L_p(X)$ and induces the original
topology on $X$. But a subspace of a locally bounded Hausdorff TVS is
locally bounded and hence metrizable, a contradiction.
\end{proof}

\section{Proofs}

\begin{lemma}\label{lem:balls}
Let $Z$ be a Hausdorff TVS that is hemicompact with respect to an
increasing sequence of compact sets $(K_n)$, let $H\subseteq Z$ be a
linear subspace, and let $q\colon H\to[0,\infty)$ be a function such
that $q(h_k)\to0$ implies $h_k\to0$ in $Z$. Then
$\{h\in H:\ q(h)\le\varepsilon\}\subseteq K_N$ for some $\varepsilon>0$
and $N$.
\end{lemma}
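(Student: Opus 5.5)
We argue by contradiction with a diagonal construction. Suppose that for every $\varepsilon>0$ and every $N$ the set $\{h\in H:\ q(h)\le\varepsilon\}$ is not contained in $K_N$. Taking $\varepsilon=1/n$ and $N=n$, we choose for each $n$ an element $h_n\in H$ with
\[
q(h_n)\le 1/n \quad\text{and}\quad h_n\notin K_n .
\]
Then $q(h_n)\to0$. By the hypothesis on $q$ (applied to the sequence $(h_n)$ in $H$), this gives $h_n\to0$ in $Z$.

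Next we form $K=\{0\}\cup\{h_n:\ n\in\mathbb N\}$. It is a convergent sequence together with its limit in the Hausdorff space $Z$, so it is compact. By hemicompactness there is $N$ with $K\subseteq K_N$. Since the $K_n$ increase, $h_n\in K_N\subseteq K_n$ for all $n\ge N$. This contradicts $h_n\notin K_n$. Hence some $\varepsilon>0$ and $N$ satisfy $\{h\in H:\ q(h)\le\varepsilon\}\subseteq K_N$.

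There is essentially no obstacle here. The one point to handle correctly is the choice of $h_n$: it must escape $K_n$ itself, not a fixed $K_N$, so that a single compact set cannot absorb the whole tail. Linearity of $H$ and the vector structure of $Z$ are not needed. Only the Hausdorff property is used, to guarantee that a convergent sequence together with its limit is compact.
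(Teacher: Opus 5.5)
Your proof is correct and is essentially the paper's argument: choose $h_n\in H\setminus K_n$ with $q(h_n)\le 1/n$, observe that $\{0\}\cup\{h_n\}$ is compact and so lies in some $K_N$, and obtain the contradiction $h_N\in K_N$. One small correction to your closing remark: a convergent sequence together with its limit is compact in any topological space, so that step does not actually use the Hausdorff property.
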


\begin{proof}
Otherwise for each $k$ there is $h_k\in H\setminus K_k$ with
$q(h_k)<1/k$. Then $h_k\to0$, and $C=\{0\}\cup\{h_k\}$ is a compact set
in $Z$. If $C\subseteq K_m$, then $h_m\in K_m$, a contradiction.
\end{proof}

The lemma will be applied in the following form: if $Z$ is mapped
continuously and injectively into a Hausdorff space $W$, then $K_N$ is
closed in $W$, and therefore the closure of the ball
$\{q\le\varepsilon\}$ in $W$ lies in the image of $Z$. A similar argument
is used in~\cite{R-hemi} to prove quasi-completeness of hemicompact
weak$^*$ duals.

\begin{proof}[Proof of Theorem~\ref{thm:Lb}]
If $X$ is finite, then $L_b(X)\cong\R^X$. Let $L_b(X)$ be hemicompact.
Put $H=E(X)$ and $q=\|\cdot\|$. Since $|\chi(f)|\le\|f\|_\infty\|\chi\|$
for $f\in C_b(X)$, $\|h_k\|\to0$ implies $h_k\to0$ in $L_b(X)$. By
Lemma~\ref{lem:balls}, $\varepsilon B_1(X)\subseteq K_N$ for some compact
set $K_N\subseteq L_b(X)$. Identify $C_b(X)$ with $C(\beta X)$; then
$L_b(X)\subseteq(C(\beta X)^*,\sigma(C(\beta X)^*,C(\beta X)))$, and
$K_N$ is closed in this space. Finitely supported probability measures
on $X$ are $w^*$-dense in the set $P(\beta X)$ of Radon probability
measures on $\beta X$, hence $\varepsilon P(\beta X)\subseteq K_N\subseteq
E(X)$. In particular, $\delta_p\in E(X)$ for all $p\in\beta X$, i.e.,
$\beta X=X$; and if $X$ were infinite, the measure
$\sum_n2^{-n}\delta_{x_n}$ with distinct $x_n\in X$ would not be finitely
supported. Hence $X$ is finite.
\end{proof}

\begin{proof}[Proof of Theorem~\ref{thm:Lp}]
Let $Z=(E(X),\tau)$.

\emph{Sufficiency.} Let $X$ be countable with all compact subsets finite.
By Fact~\ref{f:known}(5), all functionally bounded subsets of $X$ are
finite. A compact set $C\subseteq Z$ is compact in $L_p(X)$ and hence
bounded there; by Fact~\ref{f:known}(3), $\supp C$ is finite. Thus
$C\subseteq E(S)$ for a finite $S\subseteq X$; on the finite-dimensional
space $E(S)\cong\R^S$ the topology $\tau$ is Euclidean, and $C$ lies in
some cube $[-n,n]^S$. Since there are countably many finite
$S\subseteq X$, $Z$ is hemicompact with respect to the countable family
of cubes $[-n,n]^S$.

\emph{Necessity.} Let $Z$ be hemicompact. By Fact~\ref{f:known}(1), $X$
is closed in $Z$ and hence hemicompact, in particular
$\sigma$-compact. It suffices to show that every compact set
$K\subseteq X$ is finite: then $X$ is a countable union of finite sets.

Suppose that $K$ is infinite. Choose in it an infinite discrete subspace
$\{x_i:i\in\omega\}$ of pairwise distinct points. Put $H=E(K)$ and, for
$h=\sum c_jy_j$,
\[
q_t(h)=\sum_j|c_j|^t\quad(t>0),\qquad
q(h)=\sum_{m\ge1}2^{-m}\min\bigl(1,q_{1/m}(h)\bigr).
\]
If $\rho$ is a continuous $t$-seminorm on $L^{(t)}(X)$, then the function
$x\mapsto\rho(x)$ is continuous on $X$ and bounded on $K$ by some $M$, so
$\rho\le Mq_t$ on $E(K)$. Therefore $q(h_k)\to0$ implies
$q_{1/m}(h_k)\to0$ and $h_k\to0$ in $L^{(1/m)}(X)$ for all $m$, i.e.,
$h_k\to0$ in $L^{(0)}(X)$, and hence in $Z$. By Lemma~\ref{lem:balls},
$\{h\in E(K):q(h)\le\varepsilon\}\subseteq K_N$ for some compact set
$K_N\subseteq Z$. Choose $m_0$ with $2^{-m_0}\le\varepsilon/2$ and
$c\in(0,1)$ with $c^{1/m_0}\sum_ie^{-i^2/m_0}\le\varepsilon/2$; since
$c^{1/m}\le c^{1/m_0}$ and $e^{-i^2/m}\le e^{-i^2/m_0}$ for $m\le m_0$,
for $s_n=c\sum_{i\le n}e^{-i^2}x_i$ we get $q(s_n)\le\varepsilon$, i.e.,
$s_n\in K_N$ for all $n$. The map $Z\to(C(X)^*,\sigma(C(X)^*,C(X)))$ is
continuous and injective, so $K_N$ is closed there as well, and the limit
$\nu=\lim s_n$, $\nu(f)=c\sum_ie^{-i^2}f(x_i)$, lies in
$K_N\subseteq E(X)$. But $\nu\notin E(X)$: if $\nu$ had finite support
$S$, then, taking $j$ with $x_j\notin S$ and $f\in C(X)$ with $f(x_j)=1$
and $f=0$ on $S\cup\overline{\{x_i:i\ne j\}}$ (which is possible since
$x_j$ is isolated in $\{x_i\}$), we would get $\nu(f)=ce^{-j^2}\ne0$,
whereas every element with support in $S$ vanishes at $f$. A
contradiction.
\end{proof}

\begin{proof}[Proof of Theorem~\ref{thm:V}]
If $V(X)$ is hemicompact, then so is $X$, which is closed in it
(Fact~\ref{f:known}(1)). Conversely, let $K_1\subseteq K_2\subseteq\dots$
be cofinal compact sets in $X$. The sets
\[
P_m=\Bigl\{\sum_{i\le m}\lambda_ix_i:\ x_i\in K_m,\ \lambda_i\in[-m,m]\Bigr\}
\]
are compact in $V(X)$ as continuous images of $K_m^m\times[-m,m]^m$. Let
$C\subseteq V(X)$ be compact. By Fact~\ref{f:known}(4),
$C\subseteq\mathrm{sp}_n(X)$ for some $n$. Further, $C$ is bounded in
$L_p(X)$, and by Fact~\ref{f:known}(3), $\supp C$ is functionally
bounded. A hemicompact $X$ is $\sigma$-compact and, by
Fact~\ref{f:known}(5), a $\mu$-space, so $\overline{\supp C}$ is compact
and lies in some $K_m$. Consequently, $C\subseteq P_{\max(m,n)}$.
\end{proof}

\begin{proof}[Proof of Theorem~\ref{thm:groups}]
Necessity follows from the closedness of $X$ in $F(X)$ and in $A(X)$.
Conversely, let $(K_m)$ be cofinal compact sets in $X$. The sets
$F_m(K_m)$ (words of length $\le m$ in the alphabet $K_m\cup K_m^{-1}$)
are compact as continuous images of $(K_m\oplus K_m^{-1}\oplus\{e\})^m$.
A compact set $C\subseteq F(X)$ is functionally bounded and hence lies in
some $F_n(Y,X)$, where $Y\subseteq X$ is functionally bounded (the
Dikranjan--Tkachenko theorem \cite[Theorem~7.5.3]{AT08}); since $X$ is
Dieudonn\'e complete (Fact~\ref{f:known}(5)), the closure of the support
of $C$ is compact \cite[Corollary~7.5.6]{AT08} and lies in some $K_m$.
Hence $C\subseteq F_k(K_k)$ with $k=\max(m,n)$. For $A(X)$ the argument
is the same; one can also note that $A(X)$ is a closed subgroup of
$V(X)$ \cite[Proposition~4.1]{GM2017} and apply Theorem~\ref{thm:V}.
\end{proof}

\begin{proof}[Proof of Theorem~\ref{thm:komega}]
We use the fact that the $k_\omega$-spaces are exactly the hemicompact
$k$-spaces. For $L_b(X)$, statement (1) follows from
Theorem~\ref{thm:Lb}, and (3) is \cite[Theorem~3.1]{GM2017}.

Let $\tau\in\{L_p(X),L^{(r)}(X)\}$, $0\le r\le1$, and let $(E(X),\tau)$
be a $k_\omega$-space. By Theorem~\ref{thm:Lp}, $X$ is countable and all
its compact subsets are finite. Since $X$ is closed in $(E(X),\tau)$
(Fact~\ref{f:known}(1)), $X$ is a $k$-space with finite compact subsets,
i.e., $X$ is discrete. Further, every compact set in $(E(X),\tau)$ lies
in a finite-dimensional $E(S)$, $S\subseteq X$ finite (see the proof of
sufficiency in Theorem~\ref{thm:Lp}), so the $k$-topology $\tau$ is the
strongest topology inducing the Euclidean topology on all
finite-dimensional subspaces, i.e., the topology $\varphi$; on a space of
countable dimension it coincides with the strongest vector topology and
with the strongest locally convex topology
\cite[Proposition~4.1.4]{Jarchow}. For $\tau=L_p(X)$ this is impossible
for infinite $X$: in the weak topology subspaces of finite codimension
are neighbourhoods of zero, whereas in $\varphi$ they are not. Hence in
this case $X$ is finite.

Conversely, for a countable discrete $X$ the topologies
$L(X)\le L^{(r)}(X)\le L^{(0)}(X)\le V(X)$ lie between the strongest
locally convex and the strongest vector topologies on $E(X)$, which
coincide; hence $L^{(r)}(X)=\varphi$ is a $k_\omega$-space.
\end{proof}

\section{Remarks and examples}

\begin{remark}[compact sets in different topologies]\label{rem:compacts}
Let $C\subseteq E(X)$.
\begin{enumerate}[label=\textup{(\alph*)}]
\item If $C$ is compact in $L_b(X)$, then $C\subseteq B_n(X)$ for some
$n$ (Fact~\ref{f:known}(2)).
\item If $C$ is compact in $L_p(X)$ (in particular, in $L^{(r)}(X)$,
$0\le r\le1$, or in $V(X)$), then, in addition, $\supp C$ is
functionally bounded (Fact~\ref{f:known}(3)).
\item If $C$ is compact in $V(X)$, then, in addition,
$C\subseteq\mathrm{sp}_n(X)$ for some $n$ (Fact~\ref{f:known}(4)); in
particular, the lengths of supports of elements of $C$ are bounded.
\end{enumerate}
For compact sets in $L^{(r)}(X)$ statement (c) fails
\cite[Example~7.1]{RL}: let $X$ be an infinite compact space, $(x_n)$
distinct points of $X$, and $g_n=2^{-n}(x_0+\dots+x_n)$. A continuous
$r$-seminorm $\rho$ on $L^{(r)}(X)$ is bounded on $X$ by some $M$, hence
$\rho(g_n)\le(n+1)M2^{-nr}\to0$, and $\{0\}\cup\{g_n\}$ is compact in
$L^{(r)}(X)$, while $|\supp g_n|=n+1$. This difference explains the
discrepancy between Theorems~\ref{thm:Lp} and~\ref{thm:V}: in
$L^{(r)}(X)$ small balls of the quasinorm $\sum|c_j|^r$ shrink to zero
and Lemma~\ref{lem:balls} applies, whereas in $V(X)$ they do not.
\end{remark}

\begin{remark}[the Glicksberg property]\label{rem:Glicksberg}
A set $A\subseteq E(X)$ is bounded in $L_p(X)$ if and only if
$A\subseteq B_n(X)$ for some $n$ and $\supp A$ is functionally bounded:
necessity is Fact~\ref{f:known}(2),(3), and sufficiency follows from the
estimate $|\chi(f)|\le n\sup_{\supp A}|f|$. Since the dual of $L(X)$ is
$C(X)$, by Mackey's theorem the spaces $L(X)$ and $L_p(X)$ have the same
bounded sets. Moreover, Gabriyelyan \cite[Theorem~1.2]{Gab-MAP} proved
that $L(X)$ has the \emph{Glicksberg property}: the compact sets of
$L(X)$ and of its weak topology $L_p(X)$ coincide (see also
\cite{BG22}). Therefore all vector topologies $\tau$ between $L_p(X)$ and
$L(X)$ have the same compact sets, and for such $\tau$
Theorem~\ref{thm:Lp} reduces to the case $\tau=L_p(X)$, i.e., to
Remark~\ref{rem:Cp}. For $L^{(r)}(X)$, $0\le r<1$, this argument does not
work (Example~\ref{ex:noGlicksberg}).
\end{remark}

\begin{example}\label{ex:noGlicksberg}
Let $0\le r<1$ and let $s=\{x_k:k\ge2\}\cup\{x_\infty\}$ be a convergent
sequence, $x_k\to x_\infty$. Put
\[
\chi_n=\frac1n\sum_{k=n+1}^{2n}(x_k-x_\infty).
\]
Since $\chi_n(f)\to0$ for every $f\in C(s)$, the set
$C=\{0\}\cup\{\chi_n\}$ is compact in $L_p(s)$, and hence
(Remark~\ref{rem:Glicksberg}) in $L(s)$. We show that $C$ is not compact
in $L^{(r)}(s)$. Choose $r'\in(0,1)$, equal to $r$ if $r>0$, and
consider the $r'$-Banach space $\ell_{r'}$ with the $r'$-norm
$\|y\|=\sum_i|y_i|^{r'}$. The map $x_k\mapsto a_ke_k$,
$x_\infty\mapsto0$, where $a_k=1/\log k$, is continuous on $s$, since
$\|a_ke_k\|=a_k^{r'}\to0$; its linear extension
$T\colon L^{(r')}(s)\to\ell_{r'}$ is continuous, hence continuous also on
$L^{(r)}(s)$. But
\[
\|T\chi_n\|=n^{-r'}\sum_{k=n+1}^{2n}a_k^{r'}\ge
\frac{n^{1-r'}}{(\log2n)^{r'}}\to\infty,
\]
so $T(C)$ is unbounded and $C$ is not compact in $L^{(r)}(s)$. Thus, for
$0\le r<1$ the space $L^{(r)}(s)$ does not have the Glicksberg property
(its dual is still $C(s)$, and its weak topology is $L_p(s)$), and the
families of compact sets of $L^{(r)}(s)$ and $L(s)$ are different,
although the hemicompactness condition is the same for them
(Theorem~\ref{thm:Lp}). The same argument (with the map
$x\mapsto\sum_ka_k\varphi_k(x)e_k$, where $\varphi_k$ are functions with
pairwise disjoint supports shrinking to $x_\infty$) works for every
metrizable $X$ containing a nontrivial convergent sequence.
\end{example}

\begin{remark}\label{rem:Cp}
For $\tau=L_p(X)$, Theorem~\ref{thm:Lp} can also be derived from the
general characterization of \cite{R-hemi}. Absolutely convex compact
sets in $L_p(X)$ are finite-dimensional (by the Baire theorem, since the
sets of measures supported by at most $n$ points are closed in $L_b(X)$
\cite[Lemma~6.3]{LRS-TVS}), so the Mackey topology on $C(X)$ with respect
to $\langle C(X),E(X)\rangle$ coincides with the pointwise topology, and
hemicompactness of $L_p(X)$ is equivalent to $C_p(X)$ being metrizable
($X$ countable) and barrelled (by Buchwalter--Schmets \cite{BS73}: all
functionally bounded subsets of $X$ are finite). For comparison, by
Okunev \cite{Ok89} (see also \cite{KL22}), $L_p(X)$ is $\sigma$-compact
if and only if $X$ is $\sigma$-compact; thus $L_p([0,1])$ is
$\sigma$-compact but not hemicompact.
\end{remark}

\begin{remark}[completeness]
The condition of Theorem~\ref{thm:Lp} is close to Uspenskii's
completeness criterion \cite{Usp83}. A countable $X$ is Dieudonn\'e
complete, hence for countable $X$
\[
L(X)\ \text{is hemicompact}\iff L(X)\ \text{is complete}\iff
\text{all compact subsets of } X \text{ are finite}.
\]
For uncountable $X$ these properties differ: for an uncountable discrete
$X$ the space $L(X)$ is complete but not hemicompact.
\end{remark}

\begin{remark}[the $k$-property]
For the free locally convex space, a criterion of the $k$-property is
also known: $L(X)$ is a $k$-space if and only if $X$ is countable and
discrete \cite{G14}. An analogue of this result for $L^{(r)}(X)$,
$0\le r\le1$, is obtained in \cite{R-kr}. It is shown there that for an
uncountable discrete $D$ and for an infinite compact space $K$ the
spaces $L^{(r)}(D)$ and $L^{(r)}(K)$ are not $k$-spaces (the second case
uses Theorem~\ref{thm:Lp}), and the general case is reduced to the
question of whether the topology of $L^{(r)}(X)$ induces on the subspace
$E(K)$, where $K\subseteq X$ is compact, the topology $L^{(r)}(K)$.
\end{remark}

\begin{remark}[$\kappa$-Fr\'echet--Urysohn spaces]
A space $Y$ is called \emph{$\kappa$-Fr\'echet--Urysohn} if for every
open $U\subseteq Y$ and every point $y\in\overline U$ there is a sequence
of points of $U$ converging to $y$ (the notion is due to Arhangel'skii,
see \cite{LRS-GR}). A hemicompact homogeneous $\kappa$-Fr\'echet--Urysohn
space is locally compact \cite[Corollary~3.4]{LRS-GR}, and a locally
compact TVS is finite-dimensional. Therefore, for infinite $X$ no
hemicompact space $(E(X),\tau)$, in particular $V(X)$ with hemicompact
$X$, is $\kappa$-Fr\'echet--Urysohn.
\end{remark}

\begin{example}[intermediate topologies]\label{ex:products}
The free functors take disjoint sums to products:
$Q(X_1\oplus X_2)=Q(X_1)\times Q(X_2)$ for $Q\in\{L_b,L_p,L,V\}$ (for
$L_b$, because $C_b(X_1\oplus X_2)=C_b(X_1)\times C_b(X_2)$).
\begin{enumerate}[label=\textup{(\arabic*)}]
\item Let $X=\mathbb N\oplus\mathbb N$ and $\tau=L_b(\mathbb N)\times
L_p(\mathbb N)$. Then $L_b(X)\le\tau\le L_p(X)$, the space $L_p(X)$ is
hemicompact, but $(E(X),\tau)$ is not: it contains a closed copy of the
non-hemicompact space $L_b(\mathbb N)$.
\item Let $X=[0,1]\oplus[0,1]$. The topologies $V(X)=V([0,1])\times
V([0,1])$ and $\tau=L([0,1])\times V([0,1])$ lie between $L(X)$ and
$V(X)$; the first is hemicompact, the second is not, since it contains a
closed copy of $L([0,1])$.
\end{enumerate}
Thus, outside the interval $[L_p(X),L^{(0)}(X)]$ hemicompactness of
$(E(X),\tau)$ is not determined by the space $X$ alone.
\end{example}

\section{Questions}

\begin{question}\label{q:between}
Is there a natural vector topology on $E(X)$, stronger than
$L^{(0)}(X)$ \textup{(}i.e., not locally pseudoconvex\textup{)} and
weaker than $V(X)$, for which hemicompactness is determined by the space
$X$, but the condition differs from those of Theorems~\ref{thm:Lp}
and~\ref{thm:V}?
\end{question}

The notion of a natural topology is not strictly mathematical. We give a
more formal version of Question~\ref{q:between}. A class $\mathfrak B$
of TVS containing $\R$ will be called a \emph{prevariety} if it is closed
under isomorphic images, arbitrary products and subspaces, and a
\emph{variety} if, in addition, it is closed under Hausdorff quotients
(varieties of locally convex spaces were studied in~\cite{DMS72}; for
varieties of TVS see \cite[Definition~1.2]{LM20}). Standard arguments
show that for a Tychonoff $X$ and a prevariety $\mathfrak B$ there exists
a free object $L_{\mathfrak B}(X)\in\mathfrak B$, the space $E(X)$ with a
topology such that every continuous map of $X$ into a space from
$\mathfrak B$ extends uniquely to a continuous linear map of
$L_{\mathfrak B}(X)$; moreover, $X$ is a closed Hamel basis in
$L_{\mathfrak B}(X)$ (cf.\ \cite[Theorem~5.1]{DMS72}). All the topologies
on $E(X)$ considered above, except $L_b(X)$, are of this kind: $L_p(X)$,
$L(X)$, $L^{(r)}(X)$, $L^{(0)}(X)$ and $V(X)$ are the free spaces of the
varieties of weak, locally convex, locally $r$-convex, locally
pseudoconvex and all TVS, respectively. Theorem~\ref{thm:Lp} implies
that if $\mathfrak B$ is a prevariety of locally convex spaces, then
$L_{\mathfrak B}(X)$ is hemicompact if and only if $X$ is countable and
all compact subsets of $X$ are finite (indeed,
$L_p(X)\le L_{\mathfrak B}(X)\le L(X)$). For instance, this holds for
the varieties of~\cite{DMS72}, in particular for the varieties of
nuclear and of Schwartz spaces.

\begin{question}\label{q:variety}
Is there a \textup{(}pre\textup{)}variety of TVS $\mathfrak B$
containing all locally pseudoconvex spaces for which the
hemicompactness condition for $L_{\mathfrak B}(X)$ differs from those of
Theorems~\ref{thm:Lp} and~\ref{thm:V}?
\end{question}

Note that $L_{\mathfrak B}(X)=V(X)$ if and only if
$V(X)\in\mathfrak B$; since every TVS $Y$ is a quotient of $V(Y)$, for a
variety $\mathfrak B$ the equality $L_{\mathfrak B}(X)=V(X)$ for all $X$
is equivalent to $\mathfrak B$ being the variety of all TVS. A natural
candidate is the variety $\mathfrak B_0$ generated by the locally
pseudoconvex spaces and the space $L_0[0,1]$ of measurable functions
with convergence in measure. For a convergent sequence
$X=s=\{x_k\}\cup\{x_\infty\}$ the topology $L_{\mathfrak B_0}(s)$ is
strictly stronger than $L^{(0)}(s)$. Indeed, let $A_k\subseteq[0,1]$ be
independent events with $P(A_k)=1/k$ and $g_k=4^k\mathbf 1_{A_k}$. Then
$g_k\to0$ in measure, so the map $x_k\mapsto g_k$, $x_\infty\mapsto0$,
extends to a continuous linear map
$T\colon L_{\mathfrak B_0}(s)\to L_0[0,1]$. The partial sums
$h_n=\sum_{k\le n}2^{-k}(x_k-x_\infty)$ are Cauchy in $L^{(0)}(s)$ (as in
the proof of Theorem~\ref{thm:Lp}), but $T(h_m-h_n)\ge2^n$ on
$\bigcup_{n<k\le m}A_k$, and $P\bigl(\bigcup_{n<k\le m}A_k\bigr)=1-n/m$,
so $T(h_n)$ is not Cauchy in measure. Hence $T$ is not continuous on
$L^{(0)}(s)$; in particular, the ball lemma in the form used in the proof
of Theorem~\ref{thm:Lp} does not apply to $L_{\mathfrak B_0}$. We do not
know whether $\mathfrak B_0$ is the variety of all TVS and for which $X$
the space $L_{\mathfrak B_0}(X)$ is hemicompact.

\begin{question}
Let $0\le r<1$. What do the compact sets in $L^{(r)}(X)$ look like? In
particular, is it true that for $0<r<1$ every compact set in
$L^{(r)}(X)$ lies in the set $\{\sum_jc_jy_j:\ \sum_j|c_j|^r\le n\}$ for
some $n$?
\end{question}

\end{document}